\newtheorem{dfn}{Définition}
\newtheorem{propo}[dfn]{Proposition}
\newtheorem*{theo}{Theorem}
\newenvironment{customthm}[1]
  {\innercustomthm}
  {\endinnercustomthm}
\newtheorem{cor}[dfn]{Corollary}
\newtheorem{lem}[dfn]{Lemma}
\theoremstyle{remark}
\newtheorem{rem}[dfn]{Remark}
\newcommand{\suite}[2]{{\left(#1\right)}_{#2}}
\newcommand{\norm}[1]{\left\lVert #1 \right\rVert}
\newcommand{\ens}[1]{\left\{ #1\right\}}
\newcommand{\R}{\mathbb{R}}
\newcommand{\Z}{\mathbb{Z}}
\newcommand{\ent}[1]{\lfloor #1\rfloor}
\newcommand{\f}{\mathcal{F}}
\newcommand{\el}{\mathbb{L}}
\newcommand{\ds}{\displaystyle}
\newcommand{\abs}[1]{\left|#1\right|}
\providecommand{\newoperator}[3]{%
  \newcommand*{#1}{\mathop{#2}#3}}
\newoperator{\re}{\mathrm{Re}}{\,}
\newoperator{\im}{\mathrm{Im}}{\,}
\newcommand{\croch}[1]{\left[#1\right]}
\begin{document}

\title{A strictly stationary
$\beta$-mixing process satisfying the central limit theorem but not the weak 
invariance principle}

\author{Davide Giraudo \and Dalibor Voln\'y}

\address{Universit\'e de Rouen, LMRS, Avenue de l'Universit\'e, BP 12 76801
Saint-\'Etienne-du-Rouvray cedex, France.}

\email{davide.giraudo1@univ-rouen.fr\and dalibor.volny@univ-rouen.fr}

\date{\today}

\keywords{Central limit theorem, invariance principle, mixing conditions,
strictly stationary process}

\subjclass[2000]{60F05; 60F17}

 \begin{abstract}
 In 1983, N. Herrndorf proved that for a $\phi$-mixing sequence satisfying the 
 central limit theorem and 
$\liminf_{n\to\infty}\sigma^2_n/n>0$,
the weak invariance principle takes place. The question whether for strictly
stationary sequences with finite second 
moments and a weaker type
($\alpha$, $\beta$, $\rho$) of mixing the central limit theorem implies the
weak invariance principle remained open. 

We construct a strictly stationary $\beta$-mixing sequence with finite moments 
of any order and linear variance for which the central limit theorem takes
place but not the weak invariance principle.
 \end{abstract}
 \maketitle

\section{Introduction and notations}
Let $(\Omega, \f,\mu)$ be a probability space. If $T\colon \Omega\to \Omega$ 
is one-to-one, bi-measurable and 
measure preserving (in sense that $\mu(T^{-1}(A))=\mu(A)$ for all $A\in\f$),
then the sequence $\suite{f\circ T^k}{k\in\Z}$ is strictly stationary for any 
measurable $f\colon \Omega\to\R$. Conversely, each strictly stationary 
sequence can be represented in this way.

For a zero mean square integrable $f\colon \Omega\to \R$, we define $\ds 
S_n(f):=\sum_{j=0}^{n-1}f\circ T^j$, 
$\sigma_n^2(f):=\mathbb E(S_n(f)^2)$ and 
$S_n^*(f,t):=
S_{\ent{nt}}(f)+(nt-\ent{nt})f\circ T^{\ent{nt}}$, where $\ent x$ is the 
greatest integer which is 
less than or equal to $x$.

We say that $(f\circ T^j)_{j\geqslant 1}$ satisfies the \textit{central limit 
theorem with normalization $a_n$} if 
the sequence $(a_n^{-1}S_n(f))_{n\geqslant 1}$ converges weakly to a strandard
normal distribution. 
Let $C[0,1]$ denote the space of continuous functions on the unit interval 
endowed with the norm 
$\ds\norm{g}_\infty:=\sup_{t\in [0,1]}\abs{g(t)}$.

Let $D[0,1]$ be the space of real valued functions which have left limits and 
are continuous-from-the-right at each 
point of $[0,1)$. We endow it with Skorohod metric (cf. \cite{MR0233396}). We define $S_n^{**}(f,t)
:=S_{\ent{nt}}(f)$, which gives a random element of $D[0,1]$.

We shall say that the strictly stationary sequence $\suite{f\circ T^j}{j\geqslant 0}$ \textit{satisfies the weak 
invariance principle in $C[0,1]$ with normalization $a_n$} 
(respectively \textit{in} $D[0,1]$) if the sequence of $C[0,1]$ (of $D[0,1]$) valued
random variables $\suite{a_n^{-1}S_n^*(f,\cdot)}{n\geqslant 1}$ (resp. 
$\suite{a_n^{-1}S_n^{**}(f,\cdot)}{n\geqslant 1}$) weakly converges to a Brownian motion process in the 
corresponding space.

Let $\mathcal A$ and $\mathcal B$ be two sub-$\sigma$-algebras of $\mathcal F$, where $(\Omega,\mathcal F,\mu)$ is a 
probability space. We define the $\alpha$-mixing coefficients as introduced by Rosenblatt in \cite{MR0074711}:
\begin{equation*}
\alpha(\mathcal A,\mathcal B):=\sup\ens{\abs{\mu(A\cap B)-\mu(A)\mu(B)},A\in 
\mathcal A,B\in \mathcal B}.
\end{equation*}
Define the $\beta$-mixing coefficients by
\begin{equation*}
\beta(\mathcal A,\mathcal B):=\frac 12\sup\sum_{i=1}^I\sum_{j=1}^J\abs{\mu(A_i\cap B_j)-\mu(A_i)\mu(B_j)},
\end{equation*}
where the supremum is taken over the finite partitions $\ens{A_i,1\leqslant i\leqslant I}$ and $\ens{B_j,1\leqslant 
j\leqslant J}$ of $\Omega$ of elements of $\mathcal A$ (respectively of $\mathcal B$). They were introduced by 
Volkonskii and Rozanov \cite{MR0105741}.

The $\rho$-mixing coefficients were introduced by Hirschfeld \cite{PSP:1737020} and are defined by
\begin{equation*}
 \rho(\mathcal A,\mathcal B):=\sup\ens{\abs{\operatorname{Corr}(f,g)},f\in \mathbb L^2(\mathcal A),g\in \mathbb 
L^2(\mathcal B)},
\end{equation*}
where $\operatorname{Corr}(f,g):=\left[\mathbb E(fg)-\mathbb E(f)\mathbb E(g)\right]\left[\norm{f-\mathbb E(f)}_{\mathbb L^2}
\norm{g-\mathbb E(g)}_{\mathbb L^2}\right]^{-1}$. 

Ibragimov \cite{MR0105180}
introduced for the first time $\phi$-mixing coefficients , which are given by the formula
\begin{equation*}
 \phi(\mathcal A,\mathcal B):=\sup\ens{\abs{\mu(B\mid A)-\mu(B)},A\in\mathcal A,B\in\mathcal B,\mu(A)>0}.
\end{equation*}

The coefficients are related by the inequalities
\begin{equation}\label{comp_mix}
  2\alpha(\mathcal A,\mathcal B)\leqslant\beta(\mathcal A,\mathcal B)\leqslant \phi(\mathcal A,\mathcal B),\quad
\alpha(\mathcal A,\mathcal B)\leqslant \rho(\mathcal A,\mathcal B)\leqslant 2\sqrt{\phi(\mathcal A,\mathcal B)}.
\end{equation}

For a strictly stationary sequence $\suite{X_k}{k\in\Z}$ and $n\geqslant 0$ we define $\alpha_X(n) = \alpha(n) =
\alpha(\mathcal F_{-\infty}^0,\mathcal F_n^\infty)$
where $\mathcal F_u^v$ is the $\sigma$-algebra generated by $X_k$ with $u\leqslant k\leqslant v$ (if 
$u=-\infty$ or $v=\infty$, the corresponding
inequality is strict). In the same way we define coefficients $\beta_X(n)$, $\rho_X(n)$, $\phi_X(n)$.

We say that the sequence $\suite{X_k}{k\in\Z}$ is $\alpha$\textit{-mixing}
 if $\ds\lim_{n\to +\infty}\alpha_X(n)=0$, and similarily we define $\beta$, $\rho$ and $\phi$-mixing sequences. 

$\alpha$, $\beta$ and $\phi$-mixing sequences were considered in the mentioned references, 
while $\rho$-mixing sequences first appeared in \cite{MR0133175}.
 
 Inequalities \eqref{comp_mix} give a hierarchy between theses classes of mixing sequences. 

If $(a_N)_{N\geqslant 1}$ and $(b_N)_{N\geqslant 1}$ are two sequences of positive real numbers, we write 
$a_N \asymp b_N$ if there exists a positive constant $C$ such that for each $N$, $C^{-1}a_N\leqslant b_N\leqslant 
Ca_N$. 

The main results are 

\begin{customthm}{A}\label{A}
Let $\delta$ be a positive real number. There exists
a strictly stationary real valued process $Y=\suite{Y_k}{k\geqslant 0}=\suite{f\circ T^k}{k\geqslant 0}$ 
satisfying the following conditions:
\begin{enumerate}[label=\alph*)]
 \item\label{clt} the central limit theorem with normalization $\sqrt n$ takes place;
 \item\label{nwip} the weak invariance principle with normalization $\sqrt n$ does not hold;
 \item \label{SD} we have $\sigma_N(f)^2\asymp N$;
 \item\label{control_mixing} we have for some positive $C$, $\beta_Y(N)\leqslant C/N^{1/2-\delta}$;
 \item\label{moments} $Y_0\in\el^p$ for any $p>0$.
\end{enumerate}
\end{customthm}

Alternatively, we can construct the process in order to have a control of the mixing coefficients on a 
subsequence. 

\begin{customthm}{A'}\label{A'}
Let $(c_j)_{j\geqslant 0}$ be a decreasing sequence of positive numbers. Then there exists
a strictly stationary real valued process $Y=\suite{Y_k}{k\geqslant 0}=\suite{f\circ T^k}{k\geqslant 0}$ 
satisfying conditions \ref{clt}, \ref{nwip}, \ref{SD}, \ref{moments} in Theorem~\ref{A}, and:
\begin{enumerate}
 \item[d')] \label{control_mixing2}
there is an increasing sequence $(m_k)_{k\geqslant 1}$ of integers such that for each 
 $k$, $\beta_Y(m_k)\leqslant c_{m_k}$.
\end{enumerate}
\end{customthm}

\begin{rem}
 Herrndorf proved (\cite{MR699789}, Theorem 2.13) that if $(\xi_n)$ is a strictly stationary $\phi$-mixing 
 sequence for which $\sigma_n\to\infty$, $S_n/\sigma_n$ converges in distribution to a standard normal 
 distribution and $\sigma_n^{-1}\max_{1\leqslant i\leqslant n}\abs{\xi_i}\to 0$ in probability, then the weak 
 invariance principle takes place. So Herrndorf's result does not extend to $\beta$-mixing sequences.

\end{rem}

\begin{rem}
Rio \textit{et al.} proved in \cite{MR2117923} that the condition 
$\int_0^1\alpha^{-1}(u)Q^2(u)du<\infty$ 
implies the weak invariance principle, where $\alpha^{-1}(u):=\inf\ens{k,
\alpha(k)\leqslant u}$ and $Q$ is the 
right-continuous inverse of the quantile function $t\mapsto \mu\ens{X_0> t}$. 
If the process is strictly stationary, with finite moments of order $2+r$, 
$r>0$, the latter condition is 
satisfied whenever $\sum_{n=1}^\infty(n+1)^{2/r}\alpha(n) < \infty$ 
(Ibragimov \cite{MR0148125} found the 
condition $\sum_{n=1}^\infty\alpha(n)^{1-2/r} < \infty$). Since 
$Y_0\in\mathbb \el^p$ for all 
$p<\infty$, we have 
that $\sum_N\alpha(N)^r=+\infty$ for any $r<1$, hence 
in Theorem~\ref{control_mixing2} we can thus hardly get such a bound as in 
d') for the whole sequence.
\end{rem}

\begin{rem}
 Ibragimov proved that for a strictly stationary $\rho$-mixing sequence with finite moments of order $2+\delta$ for 
some positive $\delta$, the weak invariance principle holds, cf.~\cite{MR0362448}. In particular, this proves that 
our construction does not give a $\rho$-mixing process. Shao also showed in \cite{MR1038376} that the condition 
$\sum_n\rho(2^n)<\infty$ is sufficient in order to guarantee the weak invariance principle in $D[0,1]$ for 
stationary sequences having order two moments. So a potential $\rho$-mixing counter-example has to adhere to   
restrictions on the moments as well as on the mixing rates. 
\end{rem}

\medskip

\centerline{\textbf{About the method of proof}}
\smallskip

In proving the result we will use properties of coboundaries $h=g - g\circ T$ ($g$ is called a transfer 
function). 
For a positive integer $N$ and a measurable function $v$, we denote
$\ds S_N(v):=\sum_{j=0}^{N-1}U^jv$ (Here and below, $U^jv:=v\circ T^j$.).
Because $S_n(g - g\circ T)
= g-g\circ T^n$, for any sequence $a_n\to\infty$ we have $(a_n)^{-1}S_n(g - g\circ T) \to 0$ in probability hence 
adding a coboundary does not
change validity of the central limit theorem. If, moreover, $g\in\mathbb L^2$ then $n^{-1/2}\norm{S_n^*(g - 
g\circ T)}_{\infty} \to 0$ a.s. hence adding of such 
coboundary does not change validity of the invariance principle (if norming by $\sqrt n$ or by $\sigma_n$ with 
$\liminf_n \sigma^2_n/n>0$), cf. \cite{MR624435}, pages 140-141. On 
the other hand, if $g\not\in\mathbb L^2$, adding a coboundary can spoil tightness even if $g - g\circ T$ is 
square integrable, cf. \cite{MR1893125}. A similar idea 
was used in \cite{MR2359065}. In the proof of Theorem~\ref{A} and \ref{A'} we will find a coboundary 
$g - g\circ T$ which 
is $\beta$-mixing and spoils tightness. The coboundary
has all finite moments but the transfer function is not integrable. We then add an $m$ such that $(m\circ 
T^i)_{i\in\Z}$ and $(h\circ T^i)_{i\in\Z}$ are independent (enlarging the probability 
space), and $m\circ T^i$ is i.i.d. with moments of any order (in particular, it satisfies the weak invariance 
principle).

The proof uses the fact that 
$\abs{\mu(A\cap B)-\mu(A)\mu(B)}\leqslant \mu(A)$. The method does not seem to apply to processes which are 
$\rho$-mixing and for this kind of processes the problem remains open.

\section{Proof}

\subsection{Construction of $h$}

Let us consider an increasing sequence of positive integers 
$\suite{n_k}{k\geqslant 1}$ such that
\[n_1\geqslant 2\quad\mbox{and}\quad\sum_{k=1}^{\infty}\frac 1{n_k}<\infty,\]
 and for each integer $k\geqslant 1$, let $A^{-}_k, A^{+}_k$
be disjoint measurable sets such that $\mu(A^{-}_k)= 1/(2n_k^2)=\mu(A^{+}_k)$.

Let the random variables $e_k$ be defined by
\begin{equation}
 e_k(\omega):=\begin{cases}
               1&\mbox{ if }\omega\in A^{+}_k,\\
		-1&\mbox{ if }\omega\in A^{-}_k,\\
		0&\mbox{ otherwise}.	
              \end{cases}
\end{equation}
We can choose the dynamical system $(\Omega,\mathcal F,\mu,T)$ and the sets 
$A_k^+,A_k^-$ in such a 
way that the family $(e_k\circ T^i)_{k\geqslant 1,i\in\Z}$ is independent. 
We define 
$A_k:=A_k^{+}\cup A_k^{-}$ and
\begin{equation}
 h_k:=\sum_{i=0}^{n_k-1}U^{-i}e_k-U^{-n_k}\sum_{i=0}^{n_k-1}U^{-i}e_k, 
 \quad \mbox{} h:=\sum_{k=1}^{+\infty}h_k.
\end{equation}

Since $\mu\ens{h_k\neq 0}\leqslant 2/n_k$, the function $h$ is almost everywhere well-defined (by 
the Borel-Cantelli lemma).

It will be useful to express, for $N\geqslant n_k$, the sum $S_N(h_k)$ as a linear combination of 
$U^pe_k$. Denote $\ds s_k:=\sum_{j=0}^{n_k-1}U^{-j}e_k$. As $N\geqslant n_k$ and $h_k=s_k-U^{-n_k}s_k$, we have
\begin{align*}
 S_N(h_k)&= \sum_{j=0}^{N-1}(U^js_k-U^{j-n_k}s_k)\\
&=\sum_{j=0}^{N-1}U^js_k-\sum_{j=-n_k}^{N-n_k-1}U^js_k \\
&=-\sum_{j=-n_k}^{-1}U^js_k+U^N\sum_{j=-n_k}^{-1}U^js_k.
\end{align*}
We also have
\begin{align}
\sum_{j=-n_k}^{-1}U^js_k&=\sum_{j=1}^{n_k}U^{-j}s_k\nonumber\\
&=U^{-1}\sum_{i=0}^{n_k-1}\sum_{j=0}^{n_k-1}U^{-(i+j)}e_k\nonumber\\
\label{expr_sum2} \sum_{j=-n_k}^{-1}U^js_k 
&=U^{-2n_k}\left(\sum_{j=1}^{n_k}jU^{j}e_k+\sum_{j=1}^{n_k-1}(n_k-j)U^{n_k+j}e_k\right).
\end{align}
The previous equations yield
\begin{multline}\label{powers}
S_N(h_k)=\sum_{j=1}^{n_k}jU^{j+N-2n_k}e_k+
\sum_{j=1}^{n_k-1}(n_k-j)U^{j+N-n_k}e_k\\\
-\sum_{j=1}^{n_k}jU^{j-2n_k}e_k-
\sum_{j=1}^{n_k-1}(n_k-j)U^{j-n_k}e_k.
\end{multline}
Each $h_k$ is a coboundary, as if we define $\ds v_k:=\sum_{j=0}^{n_k-1}U^{-j}s_k$, then
$v_k-U^{-1}v_k=s_k-U^{-n_k}s_k=h_k$ (so in this case the transfer function is $-U^{-1}v_k$).

Since $\mu\ens{v_k\neq 0}\leqslant 2/n_k$,
Borel-Cantelli's lemma shows that the function $\ds g:=-\sum_{k=1}^{+\infty}U^{-1}v_k$ is almost everywhere 
well defined under our assumption that $\sum_k 1/n_k$ is convergent. Because $h=g-Ug$, $h$ is a coboundary.

\subsection{Mixing rates}

We show that the process $(U^if)_{i\in\Z}$ is $\beta$-mixing. In doing so we use the following proposition (cf. 
\cite{MR2325294}, Theorem~6.2).

\begin{propo}\label{indep_mixing}
Let $(X_{k,i})_i$, $k=1,2,\dots$ be mutually independent strictly stationary processes with respective mixing 
coefficients $\beta_k(n)$,
let $X_i = \sum_{k=1}^\infty X_{k,i}$ converge. The process $(X_i)_i$ is strictly stationary with mixing 
coefficients
$\beta(n) \leqslant \sum_{k=1}^\infty \beta_k(n)$.
\end{propo}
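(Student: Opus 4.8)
The plan is to establish strict stationarity directly and then to derive the mixing bound from the total-variation description of the $\beta$-coefficient combined with the subadditivity of total variation under independent products.

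First I would treat strict stationarity. Form the $\R^{\N}$-valued process $\mathbf X_i:=\suite{X_{k,i}}{k\geqslant 1}$. Since the processes $(X_{k,i})_i$ are mutually independent and each is strictly stationary, every finite-dimensional distribution of $\mathbf X$ factorizes over $k$ into shift-invariant factors, so $\mathbf X$ is strictly stationary. As $X_i=\varphi(\mathbf X_i)$ for the fixed measurable map $\varphi(\mathbf x):=\sum_k x_k$ (defined on the full-measure set where the series converges), applying $\varphi$ to each $\mathbf X_i$ preserves stationarity, whence $(X_i)_i$ is strictly stationary.

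For the mixing bound, write $\mathcal F_k:=\sigma(X_{k,i},\ i\leqslant 0)$ and $\mathcal G_k:=\sigma(X_{k,i},\ i\geqslant n)$ for the past and future of the $k$-th coordinate. Because each $X_i$ is measurable with respect to $\sigma(X_{k,i},\ k\geqslant 1)$, we have $\sigma(X_i,\ i\leqslant 0)\subseteq\bigvee_k\mathcal F_k$ and $\sigma(X_i,\ i\geqslant n)\subseteq\bigvee_k\mathcal G_k$; since $\beta(\cdot,\cdot)$ is monotone in each of its two arguments, it follows that $\beta_X(n)\leqslant\beta\bigl(\bigvee_k\mathcal F_k,\bigvee_k\mathcal G_k\bigr)$, and it suffices to bound the right-hand side by $\sum_k\beta_k(n)$. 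Here I would use the standard identity that $\beta(\mathcal A,\mathcal B)$ equals the total-variation distance between the joint law of the pair on the product measurable space and the product of its two marginals. Mutual independence of the coordinates makes both measures factorize over $k$: the joint law on $\bigvee_k(\mathcal F_k\vee\mathcal G_k)$ is $\bigotimes_k P_k$, where $P_k$ is the joint law of the $k$-th past and future, while the product of the two marginals is $\bigotimes_k Q_k$ with $Q_k:=\mu_{\mathcal F_k}\otimes\mu_{\mathcal G_k}$. The conclusion then follows from the telescoping inequality $d_{TV}\bigl(\bigotimes_kP_k,\bigotimes_kQ_k\bigr)\leqslant\sum_kd_{TV}(P_k,Q_k)$, together with $d_{TV}(P_k,Q_k)=\beta(\mathcal F_k,\mathcal G_k)=\beta_k(n)$.

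The main obstacle is the product step. The telescoping rests on replacing $P_k$ by $Q_k$ one coordinate at a time and on the fact that tensoring a signed measure with a common probability factor leaves its total variation unchanged, giving $d_{TV}(P_1\otimes P_2,Q_1\otimes Q_2)\leqslant d_{TV}(P_1,Q_1)+d_{TV}(P_2,Q_2)$; some care is needed to match the $\tfrac12\sum\sum$ definition of $\beta$ to the total-variation normalization, to identify the factorized measures correctly from independence, and to pass to the infinite product (which is immediate when $\sum_k\beta_k(n)=\infty$, and otherwise handled by taking the limit of the finite telescoping bounds).
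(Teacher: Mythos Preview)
The paper does not prove this proposition; it is stated with a reference to Theorem~6.2 in Bradley's monograph \cite{MR2325294} and then used as a black box. So there is no in-paper proof to compare against.

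Your argument is correct and is essentially the standard proof. The stationarity step via the vector-valued process and a fixed measurable coordinatewise map is fine. For the mixing bound, the key identifications you make are the right ones: monotonicity of $\beta$ under enlarging $\sigma$-fields, the equality $\beta(\mathcal A,\mathcal B)=d_{TV}(\mu_{\mathcal A\vee\mathcal B},\mu_{\mathcal A}\otimes\mu_{\mathcal B})$ (which matches the paper's $\tfrac12\sum\sum$ normalization), and the factorizations $\mu_{\bigvee_k(\mathcal F_k\vee\mathcal G_k)}=\bigotimes_k P_k$ and $\mu_{\bigvee_k\mathcal F_k}\otimes\mu_{\bigvee_k\mathcal G_k}=\bigotimes_k Q_k$ coming from mutual independence of the coordinate processes. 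The telescoping bound $d_{TV}\bigl(\bigotimes_k P_k,\bigotimes_k Q_k\bigr)\leqslant\sum_k d_{TV}(P_k,Q_k)$ then finishes the job; the passage to infinitely many factors is handled exactly as you indicate, by approximating with finite products and using that $\beta$ is already a supremum over finite partitions. This is, up to presentation, the argument in Bradley's book.
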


This reduces the proof of $\beta$-mixing of $(U^if)_{i\in\Z}$ (in 
Theorems~\ref{A} and~\ref{A'}) to that of $(U^ih)_{i\in\Z}$ and thereby to that 
of $(U^ih_k)_{i\in\Z}$ for $k\geqslant 1$.

In the following text we denote by $\beta_k(n)$ the mixing coefficients of 
the process $(h_k\circ T^i)_{i\in\Z}$.

\begin{lem}\label{mix}
For $k\geqslant 1$, we have the estimate $\beta_k(0) \leqslant 4/n_k$.
\end{lem}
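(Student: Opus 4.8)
The plan is to exploit the fact that each $h_k\circ T^i$ depends only on finitely many of the independent variables $e_k\circ T^l$, so that the past $\mathcal A:=\mathcal F_{-\infty}^0$ and the future $\mathcal B:=\mathcal F_0^\infty$ (the $\sigma$-algebras generated by $(h_k\circ T^i)_{i\leqslant 0}$ and $(h_k\circ T^i)_{i\geqslant 0}$ respectively) share only a short window of these variables. Reading off \eqref{powers}, or directly from $h_k=s_k-U^{-n_k}s_k$ with $s_k=\sum_{j=0}^{n_k-1}U^{-j}e_k$, one sees that $h_k$ is a function of $e_k\circ T^{-m}$ for $m\in\{0,\dots,2n_k-1\}$; hence $h_k\circ T^i$ is measurable with respect to $\sigma(e_k\circ T^l:i-2n_k+1\leqslant l\leqslant i)$. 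Consequently $\mathcal A\subseteq\sigma(e_k\circ T^l:l\leqslant 0)$ and $\mathcal B\subseteq\sigma(e_k\circ T^l:l\geqslant 1-2n_k)$, so the only variables feeding both $\mathcal A$ and $\mathcal B$ are the $2n_k$ overlap variables $(e_k\circ T^l)_{1-2n_k\leqslant l\leqslant 0}$.

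First I would introduce the decoupling event $G:=\{e_k\circ T^l=0\text{ for all }1-2n_k\leqslant l\leqslant 0\}$. Since the family $(e_k\circ T^l)_l$ is independent with $\mu\{e_k\neq 0\}=\mu(A_k)=1/n_k^2$, a union bound gives $\mu(G^c)\leqslant 2n_k\cdot n_k^{-2}=2/n_k$. The role of $G$ is that on $G$ the overlap variables are frozen at $0$, so for any $A\in\mathcal A$ and $B\in\mathcal B$ the indicator $\mathbf 1_A\mathbf 1_G$ becomes a function of the pure past $(e_k\circ T^l)_{l\leqslant -2n_k}$ alone and $\mathbf 1_B\mathbf 1_G$ a function of the pure future $(e_k\circ T^l)_{l\geqslant 1}$ alone. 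As these two blocks and $G$ are mutually independent, a short computation yields the exact factorization
\[
\mu(A\cap B\cap G)=\frac{\mu(A\cap G)\,\mu(B\cap G)}{\mu(G)}.
\]

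With this in hand I would estimate, for arbitrary finite partitions $\{A_i\}\subset\mathcal A$ and $\{B_j\}\subset\mathcal B$, the quantity $\sum_{i,j}\abs{\mu(A_i\cap B_j)-\mu(A_i)\mu(B_j)}$ that defines $\beta_k(0)$. Splitting each intersection over $G$ and $G^c$ and substituting the factorization above, the $G$-contribution cancels exactly, while every remaining term is supported on $G^c$ and is controlled using the crude bound $\abs{\mu(A\cap B)-\mu(A)\mu(B)}\leqslant\mu(A)$ together with the identities $\sum_i\mu(A_i\cap G^c)=\sum_j\mu(B_j\cap G^c)=\mu(G^c)$. Writing $p:=\mu(G)$, the bookkeeping gives $\sum_{i,j}\abs{\mu(A_i\cap B_j)-\mu(A_i)\mu(B_j)}\leqslant 2(1-p^2)$, whence $\beta_k(0)\leqslant 1-p^2\leqslant 2(1-p)=2\mu(G^c)\leqslant 4/n_k$.

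The final estimate is routine; the delicate points are the exact identification of the $2n_k$ shared variables (so that the overlap window has the right length) and keeping the constants sharp enough that the bound is exactly $4/n_k$ rather than a larger multiple. I expect the main obstacle to be the clean justification of the factorization on $G$: one must verify that freezing the overlap variables genuinely decouples $\mathcal A$ and $\mathcal B$ into independent pure-past and pure-future parts, even though $G$ itself need not belong to either $\mathcal A$ or $\mathcal B$.
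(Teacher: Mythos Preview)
Your argument is correct and follows essentially the same route as the paper: both identify the $2n_k$-variable overlap window $(e_k\circ T^l)_{1-2n_k\leqslant l\leqslant 0}$ and the decoupling atom $G=P_0:=\bigcap_{l=1-2n_k}^{0}\{U^le_k=0\}$ with $\mu(G^c)\leqslant 2/n_k$, and conclude $\beta_k(0)\leqslant 2\mu(G^c)\leqslant 4/n_k$. The only difference is packaging: the paper invokes two results of Bradley --- the inequality $\beta(\mathcal B_1\vee\mathcal B_2,\mathcal B_2\vee\mathcal B_3)\leqslant\beta(\mathcal B_1,\mathcal B_3)+\beta(\mathcal B_2,\mathcal B_2)$ for independent $\mathcal B_1,\mathcal B_2,\mathcal B_3$, and the atom bound $\beta(\mathcal B,\mathcal B)\leqslant 2[1-\mu(D)]$ --- whereas you carry out the equivalent partition-and-factorize computation by hand, which makes your version self-contained at the cost of a little more bookkeeping.
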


\begin{proof}
 Suppose $k$ is a positive integer. For $-\infty\leqslant j\leqslant l\leqslant \infty$, let $\mathcal H_j^l$ 
 denote the $\sigma$-field generated by $U^ih_k,j\leqslant i\leqslant l$, ($i\in\Z$), and let $\mathcal G_j^l$ 
 denote the $\sigma$-field generated by $U^ie_k$, $j\leqslant i\leqslant l$, ($i\in\Z$). 
 Define the $\sigma$-fields $\mathcal B_1:=\mathcal G_{-\infty}^{-2n_k}$, $\mathcal B_2:=\mathcal G_{-2n_k+1}^0$ and 
 $\mathcal B_3:=\mathcal G_1^\infty$. Now $\mathcal H_{-\infty}^0\subset \mathcal B_1\vee \mathcal B_2$ 
 and $\mathcal H_0^\infty\subset\mathcal B_2\vee\mathcal B_3$. Therefore $\beta_k(0)\leqslant 
 \beta(\mathcal B_1\vee \mathcal B_2,\mathcal B_2\vee \mathcal B_3)$. The $\sigma$-fields $\mathcal B_1$, 
 $\mathcal B_2$, $\mathcal B_3$, are independent; hence the $\sigma$-fields $\mathcal B_1\vee \mathcal B_3$ and 
 $\mathcal B_2\vee \mathcal B_2$ (with index $2$ in both places) are 
 independent; this implies by a result given 
 e.g. in \cite[Theorem~6.2]{MR2325294},
 \[\beta(\mathcal B_1\vee \mathcal B_2,\mathcal B_2\vee \mathcal B_3)\leqslant 
 \beta(\mathcal B_1, \mathcal 
 B_3)+\beta(\mathcal B_2, \mathcal B_2)= 0+\beta(\mathcal B_2,\mathcal B_2).\]
 Thus $\beta_k(0)\leqslant \beta(\mathcal B_2,\mathcal B_2)$. Also, the $\sigma$-field $\mathcal B_2$ 
 has an atom $P_0:=\bigcap_{i=-2n_k+1}^0\ens{U^ie_k=0}$ that satisfies $\mu(P_0)\geqslant 1-2/n_k$ 
 (since 
 $\mu(U^ie_k\neq 0)=1/n_k^2$ for each $i$). By Lemma 2.2 of \cite{MR806228}, if 
 $\mathcal B$ is a $\sigma$-field which has an atom $D$, then 
 $\beta(\mathcal B,\mathcal B)
 \leqslant 2[1-\mu(D)]$. Hence $\beta(\mathcal B_2,\mathcal B_2)\leqslant 2[1-\mu(P_0)]\leqslant 4/n_k$. 
 This finishes the proof of Lemma~5.
\end{proof}

Denoting by $\beta(N)$ the mixing coefficients of the sequence $(h\circ T^i)_{i\in \Z}$, 
Proposition~\ref{indep_mixing}, Lemma~\ref{mix} and the fact that $\beta_k(N)=0$ when $N\geqslant 
2n_k$ yield

\begin{cor}
 For each integer $k$, we have 
\begin{equation}\label{mixing_estimate}
 \beta(N)\leqslant \sum_{j\geqslant 1}\beta_j(N)\leqslant \sum_{j:2n_j\geqslant N}\frac 4{n_j}.
\end{equation}

\end{cor}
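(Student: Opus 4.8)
The plan is to assemble the three ingredients established immediately above: Proposition~\ref{indep_mixing}, Lemma~\ref{mix}, and the finite-range property of the blocks $h_j$. First I would check that the processes $(U^ih_j)_{i\in\Z}$, $j\geqslant 1$, are mutually independent. Since $h_j$ is built solely from $e_j$, and the whole family $(U^ie_k)_{k\geqslant 1,\,i\in\Z}$ is independent by construction, the $\sigma$-fields attached to distinct indices $j$ are independent. This is exactly the hypothesis needed to apply Proposition~\ref{indep_mixing} to $h=\sum_{j\geqslant 1}h_j$, which then yields $\beta(N)\leqslant\sum_{j\geqslant 1}\beta_j(N)$.

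Next I would control each summand $\beta_j(N)$. The coefficients $\beta_j(N)$ are non-increasing in $N$, because enlarging the gap between past and future only shrinks the future $\sigma$-field entering $\beta(\mathcal F_{-\infty}^0,\mathcal F_N^\infty)$; hence Lemma~\ref{mix} gives $\beta_j(N)\leqslant\beta_j(0)\leqslant 4/n_j$ for every $N\geqslant 0$.

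It then remains to justify that $\beta_j(N)=0$ once $N\geqslant 2n_j$. From the measurability relations used in the proof of Lemma~\ref{mix}, $h_j$ is $\mathcal G_{-2n_j+1}^0$-measurable, i.e.\ it depends only on the variables $U^ie_j$ with $-2n_j< i\leqslant 0$. Consequently the past $\sigma$-field $\mathcal H_{-\infty}^0$ involves only $U^ie_j$ with $i\leqslant 0$, while the shifted future $\sigma$-field $\mathcal H_N^\infty$ involves only $U^ie_j$ with $i\geqslant N-2n_j+1$. When $N\geqslant 2n_j$ these two index sets are disjoint, and the independence of the family $(U^ie_j)_{i\in\Z}$ forces the two $\sigma$-fields to be independent, so $\beta_j(N)=0$.

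Putting the pieces together, in the sum $\sum_{j\geqslant 1}\beta_j(N)$ only the indices with $2n_j>N$ can contribute, each by at most $4/n_j$, which gives the claimed bound $\beta(N)\leqslant\sum_{j:2n_j\geqslant N}4/n_j$. I do not expect a genuine obstacle here: the statement is essentially a bookkeeping assembly of facts already in hand, and the only points deserving a word of justification are the monotonicity of $\beta_j(\cdot)$ and the finite-range window of $h_j$, both of which follow directly from the independence of the $U^ie_j$.
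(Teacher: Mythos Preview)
Your proposal is correct and follows exactly the same route as the paper: the paper derives the corollary in one sentence by combining Proposition~\ref{indep_mixing}, Lemma~\ref{mix}, and the vanishing of $\beta_j(N)$ for $N\geqslant 2n_j$, and you have simply spelled out the details (monotonicity of $\beta_j(\cdot)$ and the finite-range window of $h_j$) that the paper leaves implicit.
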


Now we can prove \ref{control_mixing} and d').
Let $i(N)$ denote the unique integer such that 
$n_{i(N)}\leqslant N< n_{i(N)+1}$. For sequences 
$(u_N)_{N\geqslant 1}$, $(v_N)_{N\geqslant 1}$ of positive numbers, 
$u_N\lesssim v_N$ means that there is $C>0$ such 
that for each $N$, $u_N\leqslant C\cdot v_N$.  

\begin{propo}\label{mixing_delta}
Let $\delta>0$. 
With the choice $n_k:=\lfloor 16^{(2+\delta)^k}\rfloor$, we have \ref{control_mixing}.
\end{propo}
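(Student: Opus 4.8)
The plan is to feed the choice $n_k=\lfloor 16^{(2+\delta)^k}\rfloor$ into the estimate \eqref{mixing_estimate}, namely $\beta(N)\leqslant \sum_{j:2n_j\geqslant N}4/n_j$, and to show that the resulting sum decays like $N^{-1/(2+\delta)}$, which is in turn dominated by $N^{-(1/2-\delta)}$. Since the reduction from $\suite{U^if}{i\in\Z}$ to $\suite{U^ih}{i\in\Z}$ has already been carried out — the independent i.i.d. part $m$ contributes $0$ to the $\beta$-coefficients at positive lags via Proposition~\ref{indep_mixing} — it suffices to bound $\beta(N)$, the mixing coefficient of $\suite{U^ih}{i\in\Z}$.

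First I would locate the summation. Every index $j$ with $2n_j\geqslant N$ satisfies $j\geqslant i(N)$: for $j\leqslant i(N)-1$ the super-exponential growth forces $n_j\leqslant n_{i(N)-1}$, which is of order $n_{i(N)}^{1/(2+\delta)}\leqslant N^{1/(2+\delta)}$, hence strictly below $N/2$ once $N$ is large. Therefore $\sum_{j:2n_j\geqslant N}4/n_j\leqslant \sum_{j\geqslant i(N)}4/n_j$. The second step is to observe that this tail is dominated by its first term: since $n_{j+1}$ is of order $n_j^{2+\delta}\geqslant n_j^2$, the ratios $n_j/n_{j+1}$ are tiny and super-summable, so $\sum_{j\geqslant i(N)}4/n_j\leqslant C/n_{i(N)}$ for $N$ large.

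The heart of the matter is to convert $1/n_{i(N)}$ into a genuine power of $N$. Using the floor bounds $16^{(2+\delta)^k}-1\leqslant n_k\leqslant 16^{(2+\delta)^k}$ together with $N<n_{i(N)+1}$ and the identity $(2+\delta)^{i(N)+1}=(2+\delta)\cdot(2+\delta)^{i(N)}$, I would derive $N< \bigl(16^{(2+\delta)^{i(N)}}\bigr)^{2+\delta}\leqslant (2n_{i(N)})^{2+\delta}$, whence $n_{i(N)}\geqslant \tfrac12 N^{1/(2+\delta)}$. Combining the three steps yields $\beta(N)\leqslant C'N^{-1/(2+\delta)}$.

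Finally, the elementary inequality $\frac{1}{2+\delta}\geqslant \frac12-\delta$, equivalent to $3\delta+2\delta^2\geqslant 0$, gives $N^{-1/(2+\delta)}\leqslant N^{-(1/2-\delta)}$ for $N\geqslant 1$, which is exactly \ref{control_mixing}. I expect the only genuinely delicate point to be the bookkeeping around the floor function and the passage from $n_{i(N)+1}$ to $n_{i(N)}^{2+\delta}$; the finitely many small values of $N$ and the particular base $16$ are there precisely to keep the constants clean and can be absorbed into $C$. Note that the argument in fact delivers the stronger rate $N^{-1/(2+\delta)}$ (even $O(1/N)$ if one keeps the sharper threshold $n_j\geqslant N/2$), so the stated bound $C/N^{1/2-\delta}$ holds with room to spare.
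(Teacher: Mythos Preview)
Your proposal is correct and follows essentially the same route as the paper: both feed the choice $n_k=\lfloor 16^{(2+\delta)^k}\rfloor$ into \eqref{mixing_estimate}, observe that the tail $\sum_{j\geqslant i(N)}4/n_j$ is comparable to its first term $1/n_{i(N)}$, and then use $n_{i(N)+1}\asymp n_{i(N)}^{2+\delta}$ together with $N<n_{i(N)+1}$ to extract the power $N^{-1/(2+\delta)}$. Your additional remarks---the explicit check that $1/(2+\delta)\geqslant 1/2-\delta$ and the observation that keeping the threshold $n_j\geqslant N/2$ actually yields $\beta(N)=O(1/N)$---are correct and go slightly beyond what the paper records.
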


\begin{proof}
We deduce from \eqref{mixing_estimate}
\begin{equation*}
\beta(2n_k)\leqslant \sum_{j\geqslant k}\frac 4{n_j}\lesssim \sum_{j\geqslant 0}16^{-(2+\delta)^k(2+\delta)^j}\lesssim 16^{-(2+\delta)^k}.
\end{equation*} 
Consequently,
\begin{equation*}
\beta(2N)\leqslant \beta(2n_{i(N)})\lesssim\frac 4{n_{i(N)}}\lesssim \frac{1}{n_{i(N)+1}^{\frac 
1{2+\delta}}}\lesssim \frac{1}{N^{\frac 1{2+\delta}}}.
\end{equation*}
\end{proof}
Hence \ref{control_mixing} is fulfilled.

\begin{propo}\label{mixing_subsequence}
Given $\suite{c_k}{k\geqslant 1}$ as in Theorem~\ref{A'}, one can 
recursively choose a sequence $(n_k)
_{k\geqslant 1}$ growing to infinity
arbitrarily fast, such that for the construction given above, 
one has that for each $k\geqslant 1$, 
$\beta(2n_k)\leqslant c_{2n_k}$.

\end{propo}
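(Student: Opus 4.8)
The plan is to choose the sequence $(n_k)$ recursively so that at each stage the already-fixed tail of the mixing bound and the new term are both controlled by the prescribed $c_{2n_k}$. Recall from \eqref{mixing_estimate} that $\beta(2n_k)\leqslant\sum_{j\geqslant k}4/n_j$, so the bound on $\beta(2n_k)$ depends only on $n_k,n_{k+1},n_{k+2},\dots$, that is, on the current and future terms of the sequence. The key structural observation is that although the estimate at step $k$ involves indices $j\geqslant k$, we are free to make the later $n_j$ grow as fast as we wish, so the tail $\sum_{j>k}4/n_j$ can be made an arbitrarily small fraction of $4/n_k$; thus the whole sum is comparable to its leading term $4/n_k$.

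First I would build $(n_k)$ by induction, ensuring at each step the two constraints required. Having chosen $n_1,\dots,n_{k}$, I select $n_{k+1}$ large enough that two things hold simultaneously: on one hand $n_{k+1}>n_k$ and $n_{k+1}$ exceeds whatever lower bound is demanded by the phrase ``growing to infinity arbitrarily fast'' (one may prescribe in advance an auxiliary increasing sequence that $(n_k)$ must dominate); on the other hand $\sum_{j\geqslant k+1}4/n_j\leqslant c_{2n_{k+1}}$. The second requirement is where the recursion must be handled carefully, because $c_{2n_{k+1}}$ itself depends on the yet-unchosen $n_{k+1}$. Here I would exploit that $(c_j)$ is a fixed decreasing positive sequence: since $4/n_{k+1}$ and the further tail both shrink while $n_{k+1}$ grows, and $c_{2n_{k+1}}$ is bounded below along the chosen indices only through its own positivity, I first fix $n_{k+1}$ so that the leading term $4/n_{k+1}\leqslant\tfrac12 c_{2n_{k+1}}$, then shrink the remaining tail by choosing $n_{k+2},n_{k+3},\dots$ even larger so that $\sum_{j\geqslant k+2}4/n_j\leqslant\tfrac12 c_{2n_{k+1}}$.

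Concretely, the inductive step works as follows. Suppose $n_1,\dots,n_k$ are fixed. I claim one can pick an integer $n_{k+1}$, larger than $n_k$ and larger than any prescribed threshold, with $8/n_{k+1}\leqslant c_{2n_{k+1}}$; indeed, for any fixed $m$ we have $c_{2m}>0$, and one may simply require $n_{k+1}$ to lie in the (infinite) set $\{m>n_k:8/m\leqslant c_{2m}\}$, which is nonempty and unbounded because $c_{2m}$ decreases to a limit that is either positive (trivial) or, if it tends to $0$, still dominates $8/m$ for a cofinal set of $m$ by choosing $m$ along the indices where $c$ decreases slowly. Having fixed such an $n_{k+1}$, I then reserve the right to make all later terms dominate a rapidly increasing sequence chosen so that $\sum_{j\geqslant k+2}4/n_j\leqslant 4/n_{k+1}$. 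Combining, $\beta(2n_{k+1})\leqslant\sum_{j\geqslant k+1}4/n_j=4/n_{k+1}+\sum_{j\geqslant k+2}4/n_j\leqslant 8/n_{k+1}\leqslant c_{2n_{k+1}}$, which is exactly d') with $m_k:=2n_k$.

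The main obstacle is the circular dependence of the constraint $\sum_{j\geqslant k}4/n_j\leqslant c_{2n_k}$ on the index $n_k$ appearing on both sides: one cannot choose $n_k$ to beat $c_{2n_k}$ by a naive comparison, since enlarging $n_k$ both shrinks the left side and (weakly) shrinks the right side. The resolution, as sketched above, is to split the required bound into a leading-term part and a tail part, handle the leading term $4/n_k$ versus $c_{2n_k}$ by choosing $n_k$ in the cofinal set where $8/m\leqslant c_{2m}$ (possible precisely because $4/m$ decays faster than any fixed positive sequence along a suitable subsequence), and then absorb the tail by postponing the choice of all subsequent $n_j$ and making them grow fast enough. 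This decoupling of ``present term'' from ``future tail'' is the crux of the argument.
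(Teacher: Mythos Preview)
Your argument contains a genuine gap at the ``main obstacle'' you yourself identify. You need $8/n_{k+1}\leqslant c_{2n_{k+1}}$, and you claim the set $\{m:8/m\leqslant c_{2m}\}$ is cofinal in $\N$. This is false in general: take $c_m=1/m^2$ (a perfectly admissible decreasing positive sequence). Then $8/m\leqslant c_{2m}=1/(4m^2)$ would force $32m\leqslant 1$, which fails for every $m\geqslant 1$, so the set is empty. Your assertion that ``$4/m$ decays faster than any fixed positive sequence along a suitable subsequence'' is simply wrong: $4/m$ decays like $1/m$, and any $c_m$ that is $o(1/m)$ defeats it everywhere, not just off a subsequence. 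So the circularity you flagged is real and your proposed resolution does not work.

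The paper dissolves the circularity by a sharper use of the mixing estimate. You quote \eqref{mixing_estimate} as $\beta(2n_k)\leqslant\sum_{j\geqslant k}4/n_j$, but in fact $\beta_k(2n_k)=0$ (recall $\beta_k(N)=0$ once $N\geqslant 2n_k$), so the sum begins at $j=k+1$:
\[
\beta(2n_k)\leqslant\sum_{j\geqslant k+1}\frac{4}{n_j}.
\]
Now the bound for $\beta(2n_k)$ depends only on the \emph{future} terms $n_{k+1},n_{k+2},\dots$, while the target $c_{2n_k}$ depends only on the \emph{already chosen} $n_k$. The recursion is therefore non-circular: having fixed $n_k$, one knows $c_{2n_k}>0$ and simply chooses $n_{k+1}\geqslant\max\{2n_k,\,8/c_{2n_k}\}$ (and any further prescribed lower bound). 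The doubling condition $n_{k+1}\geqslant 2n_k$ gives $\sum_{j\geqslant k+1}4/n_j\leqslant 8/n_{k+1}\leqslant c_{2n_k}$, and you are done. Your tail-splitting idea was on the right track; the missing piece is that the leading term $4/n_k$ is not present at all.
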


 \begin{proof}  Suppose 
 that the sequence $(n_k)_{k\geqslant 1}$ satisfies  
 \[n_{k+1}\geqslant  \frac 8{c_{2n_k}}\quad \mbox{and}\quad n_{k+1}\geqslant 2n_k,\quad k\geqslant 1.\]
 Then 
 \[\beta(2n_k)\leqslant \sum_{j=k+1}^\infty\beta_j(2n_k)\leqslant \sum_{j=k+1}^\infty\beta_j(0),\]
 and, by Lemma~\ref{mix} and the condition $n_{j+l}\geqslant 2^ln_j$ for $j,l\geqslant 1$, we derive 
 \[\beta(2n_k)\leqslant \sum_{j=k+1}^\infty\frac 4{n_j}\leqslant \frac 8{n_{k+1}}.\]
 The assumption $n_{k+1}\geqslant  8/c_{2n_k}$ yields $\beta(2n_k)\leqslant c_{2n_k}$ for 
 each $k\geqslant 1$.

 \end{proof}
%
%
%

This proves d') with $m_k:=2n_k$.

\begin{rem}\label{remark_mixing}
The sequence of integers $(n_k)_{k\geqslant 1}$ can be chosen to meet all other
conditions imposed in this paper.
\end{rem}

\subsection{Proof of non-tightness}

\begin{lem}\label{max}
There exists $N_0$ such that
\begin{equation}
\mu\ens{\max_{2n_k\leqslant N\leqslant n_k^2}\abs{S_N(h_k)}\geqslant n_k}>1/4
\end{equation}
whenever $n_k\geqslant N_0$.
\end{lem}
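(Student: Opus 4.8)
The plan is to use the explicit expansion \eqref{powers} and to isolate a single, well-placed spike of $e_k$. Split $S_N(h_k)=P_k(N)-R_k$, where $P_k(N)$ collects the two $N$-dependent sums of \eqref{powers},
\[P_k(N):=\sum_{j=1}^{n_k}jU^{j+N-2n_k}e_k+\sum_{j=1}^{n_k-1}(n_k-j)U^{j+N-n_k}e_k,\]
and $R_k:=\sum_{j=1}^{n_k}jU^{j-2n_k}e_k+\sum_{j=1}^{n_k-1}(n_k-j)U^{j-n_k}e_k$ collects the two remaining sums, which do not depend on $N$. For $2n_k\leqslant N\leqslant n_k^2$ the variable $P_k(N)$ involves only the $U^pe_k$ with $1\leqslant p\leqslant n_k^2-1$, whereas $R_k$ involves only the $U^pe_k$ with $1-2n_k\leqslant p\leqslant -1$; hence, by independence of the family $\suite{U^ie_k}{i\in\Z}$, the random variables $P_k(N)$ and $R_k$ are independent, and $R_k$ is the same for every admissible $N$.

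The key observation is that a single spike can be forced to contribute its maximal weight. Fix a position $p$ and set $N:=p+n_k$. Then in $P_k(N)$ the coefficient of $U^pe_k$ equals $n_k$: it comes from the first sum with $j=n_k$, while the second sum would require the excluded index $j=0$ and so contributes nothing. If moreover $p\in\interff{n_k,n_k^2-n_k}$, then $N=p+n_k\in\interff{2n_k,n_k^2}$ is admissible in the maximum, and the support $\interff{p-n_k+1,p+n_k-1}$ of $P_k(N)$ stays inside $\interff{1,n_k^2}$. I would therefore introduce, for each such $p$, the event
\[A_p:=\ens{U^pe_k\neq 0}\cap\bigcap_{\substack{1\leqslant q\leqslant n_k^2\\ q\neq p}}\ens{U^qe_k=0}\cap\bigcap_{i=1-2n_k}^{-1}\ens{U^ie_k=0}.\]
On $A_p$ one has $R_k=0$ and $P_k(p+n_k)=n_k\,e_k\circ T^p=\pm n_k$, so $\abs{S_{p+n_k}(h_k)}=n_k$ and the maximum in the statement is at least $n_k$. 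The events $A_p$ are pairwise disjoint, since $A_p$ forces $U^qe_k=0$ for every $q\neq p$ in the window $\interff{1,n_k^2}$.

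It then remains to estimate $\mu\pa{\bigcup_p A_p}=\sum_p\mu(A_p)$. Since $\mu\ens{U^ie_k\neq 0}=1/n_k^2$ and the $U^ie_k$ are independent, each $A_p$ fixes one nonzero spike and forces $n_k^2+2n_k-2$ other coordinates to vanish, so
\[\mu(A_p)=\frac1{n_k^2}\pa{1-\frac1{n_k^2}}^{n_k^2+2n_k-2}.\]
There are $(n_k-1)^2$ admissible values of $p$, whence
\[\mu\pa{\bigcup_p A_p}=\frac{(n_k-1)^2}{n_k^2}\pa{1-\frac1{n_k^2}}^{n_k^2+2n_k-2}\xrightarrow[n_k\to\infty]{}e^{-1}>\frac14,\]
so the probability exceeds $1/4$ once $n_k$ is large enough, giving the claimed $N_0$. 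The only genuine difficulty is the possible cancellation between several spikes inside $P_k(N)$ and between $P_k(N)$ and $R_k$; the events $A_p$ circumvent it by keeping exactly one spike and steering the peak of the tent-shaped coefficient sequence onto it. This costs a factor tending to $e^{-1}$, which is still comfortably above the required threshold $1/4$.
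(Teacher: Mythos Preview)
Your proof is correct. Both your argument and the paper's isolate the same spike event $\{|U^{N-n_k}e_k|=1\}$ (equivalently your $\{U^pe_k\neq0\}$ with $N=p+n_k$), force nearby coordinates to vanish so that the tent-shaped coefficient sequence in \eqref{powers} delivers exactly $n_k$, and then union over the admissible positions. The difference is in how much is zeroed out: the paper only kills the $2n_k-2$ coordinates in the support of $P_k(N)$ around the spike, so its events $B_{N,k}$ overlap and it needs Bonferroni's inequality to bound the union from below, obtaining a limit of $1/2$. You instead force the entire window $\interff{1,n_k^2}$ to vanish except at $p$, which makes your $A_p$ pairwise disjoint and lets you sum exactly; the price is the extra factor $(1-n_k^{-2})^{n_k^2}\to e^{-1}$, giving the smaller but still sufficient limit $e^{-1}>1/4$. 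Your route is more elementary (no inclusion--exclusion), at the cost of a weaker constant that happens not to matter here.
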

\begin{proof}
For $2n_k\leqslant N\leqslant n_k^2$, thanks to \eqref{powers}, we have
\begin{equation*}
 \ens{\abs{S_N(h_k)}=n_k}\supset \ens{\abs{U^{N-n_k}e_k}=1}\cap \bigcap_{j\in 
 I}\ens{U^je_k=0}\cap\bigcap_{j\in 
J_N}\ens{U^je_k=0},
\end{equation*}
where $I=[1-2n_k,-1]\cap \Z$ and $J_N=([N-2n_k+1,N-1-n_k]\cup [N+1-n_k,N-1])
\cap\Z$. We define
\begin{align*}
 B_{N,k}&:=\ens{\abs{U^{N-n_k}e_k}=1}\cap \bigcap_{j=1-2n_k}^{-1-n_k}
 \ens{U^{N+j}e_k=0}\cap \bigcap_{j=1-n_k}^{-1}
\ens{U^{j+N}e_k=0}\\ &= \ens{\abs{U^{N-n_k}e_k}=1}\cap \bigcap_{j\in J_N} 
\ens{U^je_k=0}.
\end{align*}
We have $\abs{S_{N}(h_k)}=n_k$ on $\bigcap_{j\in I}\ens{U^je_k=0} \cap B_{N,k}$
and the sets 
$\bigcap_{j\in I}\ens{U^je_k=0}$, 
$\bigcup_{N=2n_k}^{n_k^2}B_{N,k}$ belong
to independent $\sigma$-algebras. Therefore
\begin{equation}\label{reduction}
 \mu\ens{\max_{2n_k\leqslant N\leqslant n_k^2}\abs{S_N(h_k)}\geqslant n_k}
 \geqslant \left(1-\frac 1{n_k^2}\right)^{2n_k}
 \mu\left(\bigcup_{N=2n_k}^{n_k^2}B_{N,k}\right).
\end{equation}
Recall Bonferroni's inequality, which states that for any integer $n$ and any 
events $A_j,j\in\ens{1,\dots,n}$, we have
\begin{equation}\label{bonf}
 \mu\left(\bigcup_{j=1}^nA_j\right)\geqslant \sum_{j=1}^n\mu(A_j)-
 \sum_{1\leqslant i<j\leqslant n}\mu(A_i\cap A_j).
\end{equation}
It can be proved by induction.
Notice that
$$\mu(B_{N,k}) = \frac1{n_k^2}\left(1 - \frac1{n_k^2}\right)^{2n_k-2} \geqslant
\frac1{n_k^2}\left(1 - \frac2{n_k}\right)$$
and for $i\neq j$
$$\mu(B_{i+2n_k-1,k}\cap B_{j+2n_k-1,k}) \leqslant  
\mu\ens{\abs{U^{i+n_k-1}e_k}=1}\mu\ens{\abs{U^{j+n_k-1}e_k}=1}= 
\frac1{n_k^4}$$
hence
\begin{align*}
 \mu\left(\bigcup_{N=2n_k}^{n_k^2}B_{N,k}\right)&=\mu\left(
 \bigcup_{N=1}^{(n_k-1)^2}B_{N+2n_k-1,k}\right)\\
&\geqslant \sum_{N=1}^{(n_k-1)^2}\frac 1{n_k^2}\left(1-\frac 2{n_k}\right)-
\sum_{1\leqslant i<j\leqslant (n_k-1)^2}
\mu(B_{i+2n_k-1,k}\cap B_{j+2n_k-1,k})\\
&\geqslant  \left(1-\frac 2{n_k}\right)^3 - \frac12
\end{align*}
which together with \eqref{reduction} and the inequality $(1-1/n_k^2)^{2n_k}
\geqslant 1-2/n_k$ implies that
\begin{equation}
 \mu\ens{\max_{2n_k\leqslant N\leqslant n_k^2}\abs{S_N(h_k)}\geqslant n_k}> \frac 14,
\end{equation}
whenever $n_k\geqslant N_0$, where $N_0\geqslant 3$ is such that
$\left(1-2/n\right)\left[(1-2/n)^3-1/2\right] > 1/4$ for $n\geqslant N_0$.
\end{proof}

\begin{lem}\label{lacunarity}
Assume that the sequence $\suite{n_k}{k\geqslant 1}$ satisfies the following two conditions of lacunarity: 
\begin{eqnarray}
\label{lac1}\mbox{ for each}\: k\geqslant K, \quad 16\sum_{j=1}^kn_j^2\leqslant n_{k+1};\\
\label{lac2}\mbox{ for each}\: k\geqslant K, \quad  n_{k+1}\geqslant (k+1)^2n_k,
\end{eqnarray}
where $2\leqslant K<\infty$.
Then we have for $k$ large enough
\begin{equation}\label{non_tight}
\mu\ens{\frac 1{n_k}\max_{2n_k\leqslant N\leqslant n_k^2}\abs{S_N(h)}\geqslant 1/2}\geqslant 1/8.
\end{equation}
\end{lem}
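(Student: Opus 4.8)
The plan is to isolate the single coboundary $h_k$, whose partial sums are large by Lemma~\ref{max}, and to show that the remaining scales $\sum_{j\neq k}h_j$ contribute a negligible amount on the window $2n_k\leqslant N\leqslant n_k^2$. Writing $g_j:=-U^{-1}v_j$, so that $h_j=g_j-Ug_j$ and hence $S_N(h_j)=g_j-U^Ng_j$, I would split
\[
S_N(h)=S_N(h_k)+\sum_{j<k}S_N(h_j)+\sum_{j>k}S_N(h_j)
\]
and bound the last two groups separately: the small scales deterministically and the large scales up to a set of small measure.

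For the small scales $j<k$ I would use only the pointwise bound $\norm{g_j}_\infty\leqslant n_j^2$ (each $v_j$ is a sum of $n_j^2$ terms bounded by $1$). Then $\abs{S_N(h_j)}\leqslant \abs{g_j}+\abs{U^Ng_j}\leqslant 2n_j^2$ everywhere, so $\ds\sum_{j<k}\abs{S_N(h_j)}\leqslant 2\sum_{j=1}^{k-1}n_j^2$. By the lacunarity condition \eqref{lac1} written at index $k-1$, namely $16\sum_{j=1}^{k-1}n_j^2\leqslant n_k$, this is at most $n_k/8$, uniformly in $N$ and pointwise in $\omega$.

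For the large scales $j>k$ I would exploit the small support of $g_j$: since $S_N(h_j)=g_j-U^Ng_j$ vanishes off $\ens{g_j\neq 0}\cup T^{-N}\ens{g_j\neq 0}$ and $\mu\ens{g_j\neq 0}=\mu\ens{v_j\neq 0}\leqslant 2/n_j$, the set
\[
B:=\bigcup_{j>k}\Bigl(\ens{g_j\neq 0}\cup\bigcup_{N=2n_k}^{n_k^2}T^{-N}\ens{g_j\neq 0}\Bigr)
\]
on which some large-scale partial sum is nonzero has measure at most $\sum_{j>k}(n_k^2+1)\cdot 2/n_j\lesssim n_k^2\sum_{j>k}1/n_j$. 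The lacunarity conditions \eqref{lac1}--\eqref{lac2} force $\sum_{j>k}1/n_j\leqslant (1+o(1))/n_{k+1}$ and $n_{k+1}\geqslant 16n_k^2$, so $\mu(B)\leqslant (1+o(1))/8$ as $k\to\infty$. On the complement $B^c$ every term $S_N(h_j)$ with $j>k$ and $2n_k\leqslant N\leqslant n_k^2$ vanishes.

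Finally I would combine the three estimates. Let $E_k:=\ens{\max_{2n_k\leqslant N\leqslant n_k^2}\abs{S_N(h_k)}\geqslant n_k}$; Lemma~\ref{max} gives $\mu(E_k)>1/4$. On $E_k$ pick the (random) index $N^*\in[2n_k,n_k^2]$ with $\abs{S_{N^*}(h_k)}\geqslant n_k$. Then on $E_k\cap B^c$,
\[
\abs{S_{N^*}(h)}\geqslant \abs{S_{N^*}(h_k)}-\sum_{j<k}\abs{S_{N^*}(h_j)}-\sum_{j>k}\abs{S_{N^*}(h_j)}\geqslant n_k-\tfrac{n_k}{8}-0\geqslant \tfrac{n_k}{2},
\]
so $E_k\cap B^c$ is contained in the event of \eqref{non_tight}, and its probability is at least $\mu(E_k)-\mu(B)$. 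The main obstacle is to make this difference exceed $1/8$: the crude bound $\mu(E_k)>1/4$ against $\mu(B)\lesssim 1/8$ is just barely insufficient, and the fix is to observe that the computation in the proof of Lemma~\ref{max} in fact yields $\mu(E_k)\to 1/2$ as $n_k\to\infty$. Thus for $k$ large enough $\mu(E_k)-\mu(B)\geqslant 1/2-1/8-o(1)>1/8$, which is \eqref{non_tight}.
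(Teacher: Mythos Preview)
Your argument is correct and follows the same three–part decomposition as the paper: deterministic control of the scales $j<k$, Lemma~\ref{max} for $j=k$, and a small-support argument for $j>k$. The only substantive difference is in the large-scale estimate. You bound $\mu(B)$ by summing $\mu\{g_j\neq 0\}$ over all $N$ in the window, which loses a factor $n_k^2$ and leaves you with $\mu(B)\leqslant 1/8+o(1)$; you then have to reopen the proof of Lemma~\ref{max} to extract $\mu(E_k)\to 1/2$ rather than $>1/4$. The paper avoids this detour by observing that the sets $T^{-N}\{g_j\neq 0\}$ overlap heavily: since $\{g_j\neq 0\}$ is itself contained in a union of $2n_j$ translates of $A_j$, one has
\[
\bigcup_{N=2n_k}^{n_k^2}\{S_N(h_j)\neq 0\}\subset\bigcup_{i=-2n_j+1}^{n_k^2}T^{-i}A_j,
\]
whose measure is at most $(n_k^2+2n_j)/n_j^2\leqslant 2j^{-2}$ by \eqref{lac2}. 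Summing over $j>k$ gives a bad set of measure $\leqslant 2/k\to 0$, so the stated bound $\mu(E_k)>1/4$ from Lemma~\ref{max} already suffices. Your route works, but the paper's sharper support estimate is both simpler and makes the proof self-contained with respect to Lemma~\ref{max} as stated.
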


\begin{proof}
Fix an integer $k\geqslant K$. For $2n_k\leqslant N\leqslant n_k^2$, we have
\begin{equation}\label{upper_bound}
\abs{\frac 1{n_k}\sum_{j=1}^{k-1}S_N(h_j)}\leqslant 
\frac 2{n_k}\sum_{j=1}^{k-1}(n_j+1)^2\leqslant \frac 12.
\end{equation}
Indeed, using \eqref{powers}, for $j<k$, we can give an upper bound of 
$S_N(h_j)$ (as $N\geqslant 2n_k>2n_j$) as
\begin{equation*}
\abs{S_N(h_j)}\leqslant 2\sum_{l=1}^{n_j}l+ 2\sum_{l=1}^{n_j-1}l=
2n_j^2,
\end{equation*}
and \eqref{upper_bound} holds by \eqref{lac1}.

Now fix $j>k$. 
Writing $\ds S_N(h_j)=\sum_{i=0}^{N-1}U^is_j-\sum_{i=-n_j}^{N-n_j-1}U^is_j$, where 
$s_j:=\sum_{i=0}^{n_j-1}U^{-i}e_j$, we can see that
\[\bigcup_{N=2n_k}^{n_k^2}\ens{S_N(h_j)\neq 0}\subset 
\bigcup_{i=-2n_j+1}^{n_k^2}T^{-i}A_j,\] hence using 
\eqref{lac2}
\begin{equation}\label{support}
\mu\left(\bigcup_{N=2n_k}^{n_k^2}\ens{S_N(h_j)\neq 0}\right)\leqslant \frac{n_k^2+2n_j}{n_j^2}\leqslant
\frac{2n_k}{n_j}\leqslant 2j^{-2}.
\end{equation}
Let $E_k:=\bigcup_{N=2n_k}^{n_k^2}\bigcup_{j\geqslant k+1}\ens{S_N(h_j)\neq 0}$. 
By
\eqref{support}, we have 
\begin{equation}\label{support2}
 \mu(E_k)\leqslant \sum_{j\geqslant k+1} 2j^{-2}\leqslant 2/k.
\end{equation}

By \eqref{upper_bound},
\begin{equation*}
 \frac 1{n_k}\max_{2n_k\leqslant N\leqslant n_k^2}\abs{S_N(h)}\geqslant
\frac 1{n_k}\max_{2n_k\leqslant N\leqslant n_k^2}\abs{S_N\left(\sum_{j\geqslant k}h_j\right)}-\frac 12,
\end{equation*}
hence
\begin{align*}
\mu\ens{\frac 1{n_k}\max_{2n_k\leqslant N\leqslant n_k^2}\abs{S_N(h)}\geqslant \frac 12}
&\geqslant \mu\ens{\frac 1{n_k}\max_{2n_k\leqslant N\leqslant n_k^2}\abs{S_N\left(\sum_{j\geqslant 
k}h_j\right)}\geqslant 1}\\
&\geqslant \mu\left(\ens{\frac 1{n_k}\max_{2n_k\leqslant N\leqslant n_k^2}\abs{S_N\left(\sum_{j\geqslant 
k}h_j\right)}\geqslant 1}\cap E_k^c\right) \\
&=\mu\left(\ens{\frac 1{n_k}\max_{2n_k\leqslant N\leqslant n_k^2}\abs{S_N\left(h_k\right)}\geqslant 1}\cap 
E_k^c\right) \\
&\geqslant \mu\left(\ens{\frac 1{n_k}\max_{2n_k\leqslant N\leqslant n_k^2}\abs{S_N\left(h_k\right)}\geqslant 
1}\right)-\mu(E_k).
\end{align*}
The result follows from Lemma~\ref{max} and \eqref{support2}.
\end{proof}

 The previous lemma yields together with Theorems 8.1 and 15.1 of \cite{MR0233396} 
and the convergence to $0$ of the finite dimensional distributions of $(N^{-1/2}S_N^*(h))_{N\geqslant 1}$ and 
$(N^{-1/2}S_N^{**}(h))_{N\geqslant 1}$ the following corollary.

\begin{cor}\label{cor}
If $(n_k)_{k\geqslant 1}$ satisfies \eqref{lac1} and \eqref{lac2}, then the sequences 
$(N^{-1/2}S_N^*(h,\cdot))_{N\geqslant 1}$ and 
$(N^{-1/2}S_N^{**}(h,\cdot))_{N\geqslant 1}$ are not tight in their respective 
spaces.
\end{cor}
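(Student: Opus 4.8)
My plan is to derive non-tightness by contradiction, exhibiting that the only weak limit compatible with the finite-dimensional distributions is ruled out by Lemma~\ref{lacunarity}.

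First I would establish that the finite-dimensional distributions of both normalized sequences converge to those of the identically zero process. Since $h=g-Ug$ is a coboundary, $S_N(h)=g-U^Ng$; as $g$ is almost everywhere finite, $N^{-1/2}g\to 0$ almost surely, and because $U^Ng$ has the same law as $g$, one has $\mu\ens{\abs{N^{-1/2}U^Ng}>\eps}=\mu\ens{\abs g>\eps\sqrt N}\to 0$. Hence $N^{-1/2}S_N(h)\to 0$ in probability; replacing $N$ by $\ent{Nt}$ and absorbing the interpolation term (which is $N^{-1/2}$ times a function equidistributed with $h\in\el^2$) gives convergence to $0$ in probability at every fixed $t$, and thus for every finite family of times. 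This is the finite-dimensional convergence alluded to just before the statement.

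Next, suppose for contradiction that $\suite{N^{-1/2}S_N^*(h,\cdot)}{N\geqslant1}$ is tight in $C[0,1]$. By Prohorov's theorem every subsequence would have a further weakly convergent subsequence, whose limit must carry the finite-dimensional distributions found above; that limit can therefore only be the constant zero function, and Theorem~8.1 of \cite{MR0233396} upgrades this to weak convergence of the entire sequence to $0$. As the limit is deterministic, weak convergence is equivalent to convergence in probability, so $\norm{N^{-1/2}S_N^*(h,\cdot)}_\infty\to 0$ in probability.

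Finally I would contradict this along the subsequence $N=n_k^2$. Because $S_N^*(h,\cdot)$ is the piecewise-linear interpolation of the partial sums, its supremum norm equals $N^{-1/2}\max_{0\leqslant j\leqslant N}\abs{S_j(h)}$, and at $N=n_k^2$ the normalization is exactly $1/n_k$ while the maximum dominates $\max_{2n_k\leqslant j\leqslant n_k^2}\abs{S_j(h)}$. Inequality~\eqref{non_tight} of Lemma~\ref{lacunarity} then yields
\[\mu\ens{\norm{(n_k^2)^{-1/2}S_{n_k^2}^*(h,\cdot)}_\infty\geqslant 1/2}\geqslant 1/8\]
for all large $k$, which is incompatible with convergence to $0$ in probability; hence the sequence is not tight. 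The argument for $\suite{N^{-1/2}S_N^{**}(h,\cdot)}{N\geqslant1}$ in $D[0,1]$ is word-for-word the same, now invoking Theorem~15.1 of \cite{MR0233396}. The one step deserving care — and where I expect the real subtlety — is the topological passage in the Skorohod space: weak convergence to $0$ must be turned into convergence in probability for the \emph{uniform} norm, which is legitimate precisely because the limit $0$ is continuous, so that Skorohod convergence to it is uniform. Everything else is routine bookkeeping relating the maximum in Lemma~\ref{lacunarity} to the sup-norm at scale $n_k^2$.
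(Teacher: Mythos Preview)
Your proposal is correct and follows essentially the same route as the paper: the paper's proof consists of a single sentence invoking Lemma~\ref{lacunarity}, the convergence of finite-dimensional distributions to zero, and Theorems~8.1 and~15.1 of \cite{MR0233396}; you have simply unpacked this argument in detail (Prohorov, identification of the only possible limit as $0$, contradiction via the subsequence $N=n_k^2$). The one extra observation you make explicit---that weak convergence in $D[0,1]$ to the continuous function $0$ entails convergence in probability for the uniform norm---is exactly the point hidden behind the citation of Theorem~15.1.
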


Let $\delta>0$. Then the choice $n_k:= \lfloor 16^{(2+\delta)^k}\rfloor$ satisfies the 
conditions \eqref{lac1} and \eqref{lac2}.

Under assumptions of Proposition 7 (the choice of $n_k$) we get 
\ref{control_mixing} in \ref{A} and because 
\eqref{lac1}, \eqref{lac2} are satisfied, we get \ref{nwip} in \ref{A}.
 
 By Remark~\ref{remark_mixing}, we can construct in 
Proposition~\ref{mixing_subsequence} the sequence  
$(n_k)_{k\geqslant 1}$ in such a way that it also satisfies \eqref{lac1} and 
\eqref{lac2}; this yields \ref{nwip} in Theorem~\ref{A'}, and of course from 
Proposition~\ref{mixing_subsequence} itself, property d') in 
Theorem~\ref{A'} also holds.

\subsection{Proof of \ref{clt} and \ref{SD}}

Let us denote by $\sigma_N^2$ the variance of $S_N(h)$, that is, $\mathbb E[S_N(h)^2]$.

\begin{propo}\label{sigma}
Under the conditions \eqref{lac1} and \eqref{lac2}, we have $\sigma_N^2\lesssim N$. 
\end{propo}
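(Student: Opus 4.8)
The plan is to bound $\sigma_N^2 = \mathbb{E}[S_N(h)^2]$ by exploiting the near-orthogonality of the pieces $S_N(h_k)$ coming from the independence of the families $(e_k \circ T^i)_i$ for distinct $k$. Since $h = \sum_k h_k$ and the $h_k$ are built from independent mean-zero blocks, we have $\mathbb{E}[S_N(h_i) S_N(h_j)] = 0$ for $i \neq j$, so that $\sigma_N^2 = \sum_{k \geq 1} \mathbb{E}[S_N(h_k)^2]$. The task therefore reduces to estimating each term $\mathbb{E}[S_N(h_k)^2]$ and summing over $k$, splitting the sum according to whether $k$ is small, comparable to, or large relative to the index $i(N)$ with $n_{i(N)} \leqslant N < n_{i(N)+1}$.

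First I would compute $\mathbb{E}[S_N(h_k)^2]$ using the explicit expression \eqref{powers}, which writes $S_N(h_k)$ as a linear combination of the $U^p e_k$ with integer coefficients bounded by $n_k$. Since the $e_k \circ T^i$ are i.i.d. with $\mathbb{E}[(U^p e_k)^2] = \mu(A_k) = 1/n_k^2$ and are orthogonal across distinct shifts, the variance is the sum of squared coefficients times $1/n_k^2$. For $N \geqslant 2n_k$ this gives a bound of order $\sum_{j=1}^{n_k} j^2 / n_k^2 \lesssim n_k$, independent of $N$; for $N < 2n_k$ one must instead track how many of the $O(N)$ translates actually contribute, yielding a bound that grows with $N$ but is controlled by $\min(N, n_k)$-type considerations. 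The clean way to organize this is to note $\mathbb{E}[S_N(h_k)^2] \lesssim n_k$ always (from the coefficient count), while a separate, finer bound of order $N^2/n_k^2 \cdot (\text{something})$ or simply $\lesssim N \cdot (2/n_k)$ handles the regime where $h_k$ is essentially supported on a vanishing set.

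The core of the argument is then the summation $\sigma_N^2 = \sum_k \mathbb{E}[S_N(h_k)^2] \lesssim \sum_{k : 2n_k \leqslant N} n_k + \sum_{k : 2n_k > N} (\text{small-}k\text{ bound})$. For the terms with $2n_k \leqslant N$, the lacunarity hypothesis \eqref{lac1}, namely $16\sum_{j=1}^k n_j^2 \leqslant n_{k+1}$, forces the $n_k$ to grow so fast that $\sum_{k : n_k \leqslant N} n_k \lesssim n_{i(N)} \leqslant N$; the geometric (indeed super-geometric) growth means the largest term dominates the sum up to a constant. For the terms with $2n_k > N$, I would use the alternative bound on $\mathbb{E}[S_N(h_k)^2]$ together with the fact that $h_k$ lives on a set of measure $\leqslant 2/n_k$, so that its contribution to the variance over a window of length $N$ is at most of order $N \cdot \mu\{h_k \neq 0\}$-weighted, and lacunarity again makes $\sum_{k : n_k > N/2}$ of these bounded by a constant multiple of $N$. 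Combining the two regimes yields $\sigma_N^2 \lesssim N$.

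The main obstacle I anticipate is getting the estimate on $\mathbb{E}[S_N(h_k)^2]$ uniform across all three regimes and verifying that the lacunarity condition \eqref{lac1} is exactly strong enough to make $\sum_k \mathbb{E}[S_N(h_k)^2] \lesssim N$ rather than, say, $\lesssim N \log N$ or worse. The delicate point is the crossover regime $n_k \approx N$, where neither the crude coefficient bound $\lesssim n_k$ nor the support bound is obviously sharp; here I expect the factor $16$ and the quadratic sum $\sum n_j^2$ in \eqref{lac1} to be precisely what guarantees that the dominant contribution comes from a single index $k \approx i(N)$ with $n_k \asymp N$, after which the super-exponential decay on either side is summable with constant independent of $N$. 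Confirming that the accumulated constant does not depend on $N$ is the step requiring the most care.
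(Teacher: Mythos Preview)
Your overall strategy---use independence across $k$ to write $\sigma_N^2=\sum_k\mathbb E[S_N(h_k)^2]$, then split the sum at $k=i(N)$---is exactly the paper's approach, and your treatment of the small-$k$ regime (use \eqref{powers} to get $\mathbb E[S_N(h_k)^2]\lesssim n_k$, then sum via \eqref{lac1}) is correct and matches the paper.

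The gap is in the large-$k$ regime. First, note that \eqref{powers} is derived under the hypothesis $N\geqslant n_k$, so it is not available when $n_k>N$; you need a separate expression for $S_N(h_k)$ there. More importantly, the bound you propose, ``$\lesssim N\cdot(2/n_k)$'' coming from the support of $h_k$, is not correct: the actual order of $\mathbb E[S_N(h_k)^2]$ for $N\leqslant n_k$ is $N^2/n_k$, not $N/n_k$. The support heuristic does not translate into that variance bound because $h_k$ is not bounded by $1$ on its support. The paper obtains the right estimate by writing out $S_N(h_k)=(I-U^{-n_k})\bigl[\sum_{j=1}^{N-1}j(U^{j-n_k}+U^{N-j})e_k+N\sum_{j=N}^{n_k}U^{j-n_k}e_k\bigr]$ and reading off the variance as $\lesssim N^3/n_k^2+N^2/n_k\lesssim N^2/n_k$. (A quicker route you could take instead: $\norm{S_N(h_k)}_2\leqslant N\norm{h_k}_2$ with $\norm{h_k}_2^2=2/n_k$ gives the same $2N^2/n_k$ immediately.) Then \eqref{lac2} yields $\sum_{k>i(N)}N^2/n_k\lesssim N^2/n_{i(N)+1}\lesssim N$.

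Finally, the ``crossover'' you worry about is not an issue: the two regimes $n_k\leqslant N$ and $n_k>N$ are exhaustive, and the respective bounds $n_k$ and $N^2/n_k$ agree (both $\asymp N$) at the boundary, so no separate analysis is needed there.
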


\begin{proof}
 From \eqref{powers} and \eqref{lac1}, we deduce that 
\begin{equation}\label{variance1}
 \sum_{j=1}^{i(N)}\mathbb E[S_N(h_j)^2]\lesssim \sum_{j=1}^{i(N)}n_j\leqslant 2n_{i(N)}\lesssim N.
\end{equation}
Recall that $h_k=(I-U^{-n_k})s_k$ with $s_k:=\sum_{i=0}^{n_k-1}U^{-i}e_k$. Therefore, when $n_k\geqslant N$, 
we have by a similar computation as for \eqref{expr_sum2},

\begin{equation}
 S_N(h_k)=(I-U^{-n_k})\sum_{j=1}^{N-1}j(U^{j-n_k}+U^{N-j})e_k+N(I-U^{-n_k})
 \sum_{j=N}^{n_k}U^{j-n_k}e_k.
\end{equation}

The first term has a variance of order $N^3/n_k^2$,  and the variance of the 
second term is (at most) of order $N^2n_k/n_k^2$. We thus have that for 
$n_k\geqslant N$, $\mathbb E[S_N(h_k)^2]\lesssim N^2/n_k$, 
hence by \eqref{lac2},
\begin{align*}
 \sum_{k\geqslant i(N)+1}\mathbb E[S_N(h_k)^2]&\lesssim  \sum_{k\geqslant i(N)+1}\frac{N^2}{n_k} 
\nonumber\\& =\frac{N^2}{n_{i(N)+1}}+ \sum_{k\geqslant i(N)+2}\frac{N^2}{n_k}
\leqslant \frac{N^2}{n_{i(N)+1}}\left(1+\sum_{j\geqslant i(N)+2}\frac 1{j^2}\right)\nonumber,\\
\end{align*}
therefore,
\begin{equation}
\label{variance2}
 \sum_{k\geqslant i(N)+1}\mathbb E[S_N(h_k)^2]\lesssim N.
\end{equation}
Combining \eqref{variance1} and \eqref{variance2}, and using (for a fixed $N$) 
the independence of the sequence 
$(S_N(h_j))_{j\geqslant 1}$, we conclude that $
\sigma_N^2(h)=\sigma_N^2(g-g\circ T)\lesssim N$. 

When we add a mean-zero nondegenerate independent sequence $(m\circ 
T^i)_{i\in\Z}$ with moments of any order 
greater than $2$, the 
variance of the $N$th partial sum of $((m+h)\circ T^i)_{i\geqslant 1}$ is 
bounded above and below by a quantity 
proportional to $N$, hence \ref{SD} is satisfied in Theorems~\ref{A} 
and~\ref{A'}. By the observation in the 
paragraph "About the method of proof", \ref{clt} holds.
\end{proof}

\subsection{Moments of the coboundary and the transfer function}

One can wonder to which $\el^p$ space can $g$ and $g-g\circ 
T$ belong.
\begin{propo}\label{moments_of_h}
Under the conditions \eqref{lac1} and \eqref{lac2}, we have $g\in \el^p$ for $0<p<1$ and $g-g\circ T\in \el^p$ for 
each $p>0$.
\end{propo}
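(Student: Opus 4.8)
The plan is to prove the two membership claims separately, the key observation being that the coboundary $h_k$ is a combination $\sum_{i=0}^{2n_k-1}\pm U^{-i}e_k$ with \emph{unit} coefficients, whereas the transfer function $v_k$ has coefficients growing up to $n_k$; this difference alone accounts for the gap between ``$p>0$ arbitrary'' and ``$0<p<1$''.

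For $h$, I would start from $h=\sum_{k\geqslant 1}h_k$ with $h_k=\sum_{i=0}^{n_k-1}U^{-i}e_k-\sum_{i=n_k}^{2n_k-1}U^{-i}e_k$. Since each $U^{-i}e_k$ takes values in $\{-1,0,1\}$ and the coefficients are $\pm1$, one has the pointwise bound $\abs{h_k}\leqslant Z_k$, where $Z_k:=\sum_{i=0}^{2n_k-1}\I\ens{U^{-i}e_k\neq 0}$. As the family $(U^{-i}e_k)_i$ is independent with $\mu\ens{U^{-i}e_k\neq 0}=n_k^{-2}$, the variable $Z_k$ follows a binomial law with parameters $2n_k$ and $n_k^{-2}$, of mean $2/n_k$. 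The first step is a uniform-in-$k$ bound on its integer moments: every factorial moment of $Z_k$ is at most $(2/n_k)^j\leqslant 2/n_k$ for $n_k\geqslant 2$, and expressing ordinary moments as nonnegative combinations of factorial moments (Stirling numbers of the second kind) gives $\mathbb E[Z_k^q]\leqslant C_q/n_k$ for each integer $q\geqslant 1$. By monotonicity of the $\el^p$ norms on a probability space it suffices to treat $p=q$ an integer, and Minkowski's inequality then yields
\[\norm{h}_q\leqslant \sum_{k\geqslant 1}\norm{h_k}_q\leqslant C_q^{1/q}\sum_{k\geqslant 1}n_k^{-1/q},\]
a series which converges because \eqref{lac2} forces $n_k$ to grow faster than any geometric sequence. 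Hence $h\in\el^p$ for every $p>0$.

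For $g=-\sum_{k\geqslant 1}U^{-1}v_k$, recall that $v_k=\sum_{m=0}^{2n_k-2}c_m U^{-m}e_k$ with integer coefficients $0\leqslant c_m\leqslant n_k$. Consequently $\abs{v_k}\leqslant n_k Z_k'$, where $Z_k':=\sum_{m=0}^{2n_k-2}\I\ens{U^{-m}e_k\neq 0}$ satisfies $\mathbb E Z_k'\leqslant 2/n_k$. For $0<p<1$ I would use the subadditivity inequality $\pa{\sum_k a_k}^p\leqslant\sum_k a_k^p$ for nonnegative $a_k$, together with the fact that $T$ preserves $\mu$, to get $\mathbb E\abs{g}^p\leqslant\sum_k \mathbb E\abs{v_k}^p\leqslant\sum_k n_k^p\,\mathbb E[(Z_k')^p]$. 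Since $Z_k'$ is integer valued and $p\leqslant 1$ we have $(Z_k')^p\leqslant Z_k'$ pointwise, whence $\mathbb E[(Z_k')^p]\leqslant 2/n_k$ and $\mathbb E\abs{g}^p\leqslant 2\sum_k n_k^{p-1}$, which is finite because $p-1<0$ and $n_k\to\infty$ fast.

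The only genuinely delicate point, and the one I would be most careful about, is the contrast driving the two regimes: the unit coefficients of $h_k$ keep $\abs{h_k}$ below the occupation count $Z_k$, all of whose moments are of order $n_k^{-1}$ and hence summable after taking any root, whereas the coefficients of $v_k$ reach $n_k$ and inflate the estimate to $n_k^{p-1}$, summable precisely when $p<1$. One should note that this last bound degenerates at $p=1$ (the series becomes $\sum_k 1$), in agreement with the transfer function failing to be integrable. Beyond this, the argument is routine: the uniform moment control of the binomial counts, subadditivity for $0<p<1$, Minkowski for $p\geqslant 1$, and the convergence of $\sum_k n_k^{-1/q}$ guaranteed by the lacunarity hypotheses.
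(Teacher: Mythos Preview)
Your argument is correct and follows essentially the same route as the paper. For $g$ you use subadditivity over $k$ and then the integer-value trick $(Z_k')^p\leqslant Z_k'$, whereas the paper applies subadditivity directly to the coefficients in the expansion of $v_k$; both yield the bound $\mathbb E|g_k|^p\lesssim n_k^{p-1}$. For $h$ you rederive the needed moment bound via factorial moments and Stirling numbers, which is exactly the content of the Berend--Tassa inequality the paper quotes (and the paper also mentions Rosenthal's inequality as an alternative), so the two proofs are the same in substance.
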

\begin{proof}
Let $g_k:=U^{-1}v_k$, where $\ds v_k=\sum_{j=0}^{n_k-1}U^{-j}s_k$ and $\ds s_k=\sum_{j=0}^{n_k-1}U^{-j}e_k$. Recall 
that $g=-\sum_{k=1}^\infty g_k$.
For $0<p<1$ and any two non-negative real numbers $a$ and $b$, we have $(a+b)^p\leqslant a^p+b^p$. This gives, 
using \eqref{expr_sum2},
\begin{align*}
\mathbb E\abs{g_k}^p&\leqslant \left(\sum_{j=1}^{n_k}j^p \mathbb E\abs{U^{-j}e_k}+
\sum_{j=1}^{n_k-1}(n_k-j)^p \mathbb E\abs{U^{-j+n_k}e_k}\right)\\
&=\frac 1{n_k^2}\left(n_k^p+2\sum_{j=1}^{n_k-1}j^p\right)\\
&\leqslant\frac{1}{n_k^2}\left(n_k^p+2n_k^{p+1}\right)\\
&\leqslant 3n_k^{p-1}.
\end{align*}
By \eqref{lac2}, we have $n_k\geqslant k!\cdot n_1$ hence the series $\sum_{k\geqslant 1}\mathbb 
E\abs{g_k}^p$ is convergent. This proves that $g\in\el^p$ for $0<p<1$.

Corollary 2.4. in \cite{MR2792580} states the following: given positive integers
 $t$ and $p$, $X_1,\dots,X_t$ 
independent random variables such that $\mu\ens{0\leqslant X_j\leqslant 1}=1$ 
for each $j\in \croch t$, then
\begin{equation}\label{bound_moments}
\mathbb E\left(X\right)^p\leqslant B_p\cdot\max\ens{\mathbb E(X),
(\mathbb E(X))^p},
\end{equation}
where $B_p$ is the $p$-th Bell's number (defined by the recursion relation 
$B_{p+1}=\sum_{k=0}^p\binom pk B_k$ and 
$B_0=B_1=1$) and 
$X:=\sum_{j=1}^tX_j$.

We shall show that the series $\sum_{k\geqslant 1}\norm{h_k}_{p}$ is convergent 
for any integer $p$. Fix 
$k\geqslant 1$, and let $t:=2n_k$, $X_j:=\abs{U^{j-2n_k}e_k}$.
Applying \eqref{bound_moments}, we get
\begin{equation*}
\norm{h_k}_{p}^p\leqslant B_p\cdot\max\ens{2n_k^{-1},(2n_k^{-1})^p}=
2B_p\cdot n_k^{-1} ,
\end{equation*}
hence $\norm{h_k}_{p}\leqslant (2B_p)^{1/p}\cdot n_k^{-1/p}$ and 
condition~\eqref{lac2} guarantees the convergence of the series 
$\sum_kn_k^{-1/p} $. One could also use Rosenthal's 
inequality.
\end{proof}
Since the added process has moments of any order, Proposition~\ref{moments_of_h} proves \ref{moments} in Theorems~\ref{A} and~\ref{A'}.
\begin{propo}\label{moment_transfer_function}
 The transfer function $g$ does not belong to $\el^1$.
\end{propo}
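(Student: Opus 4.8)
The plan is to show that $g$ fails to be integrable by examining the contributions $g_k = U^{-1}v_k$ individually and showing that their $\el^1$ norms do not decay summably; in fact I would show each $\norm{g_k}_1$ is bounded below by a constant times $n_k^{-1}\cdot n_k = $ a quantity that does not go to zero fast enough, forcing $\sum_k \norm{g_k}_1$ to diverge. Concretely, recall from \eqref{expr_sum2} that $v_k$ is, up to a power of $U$ (which preserves $\el^1$ norm by measure-invariance of $T$), equal to $\sum_{j=1}^{n_k} j\, U^{j}e_k + \sum_{j=1}^{n_k-1}(n_k-j)U^{n_k+j}e_k$. The summands $U^{j}e_k$ and $U^{n_k+j}e_k$ are supported on disjoint sets (distinct time-shifts of $A_k$), so on each such set $v_k$ takes the value $\pm j$ or $\pm(n_k-j)$ respectively, and these disjoint supports let me compute $\mathbb E\abs{v_k}$ exactly as a sum of $\abs{\text{coefficient}}\times \mu(\text{support})$.

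First I would carry out this exact computation: since each $U^pe_k$ has $\mu\{U^pe_k\neq 0\}=1/n_k^2$ and the supports are disjoint,
\[
\mathbb E\abs{g_k}=\mathbb E\abs{v_k}=\frac 1{n_k^2}\left(\sum_{j=1}^{n_k}j+\sum_{j=1}^{n_k-1}(n_k-j)\right)=\frac 1{n_k^2}\left(\frac{n_k(n_k+1)}2+\frac{n_k(n_k-1)}2\right)=\frac 1{n_k^2}\cdot n_k^2=1.
\]
Thus $\mathbb E\abs{g_k}=1$ for every $k$, so in particular $\norm{g_k}_1\not\to 0$. This is the crux: the transfer function's blocks each carry a full unit of $\el^1$ mass, which is exactly the mechanism that was traded off against the summability $\sum_k 1/n_k<\infty$ that made $g$ almost everywhere defined (and $\el^p$ for $p<1$).

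Next I would convert this into non-integrability of $g=-\sum_k g_k$. The main obstacle is that $\mathbb E\abs{g_k}=1$ alone does not formally give $\mathbb E\abs{\sum_k g_k}=\infty$, because cancellation or overlap of supports across different $k$ could in principle occur. I would resolve this by observing that the dominant supports of the distinct $g_k$ are asymptotically disjoint in a quantitative sense, or more cleanly, by a lower-bound argument: restrict to a region where exactly one $g_k$ is active. Since $\mu\{v_k\neq 0\}\leqslant 2/n_k\to 0$ with $\sum_k 1/n_k<\infty$, by Borel--Cantelli almost every $\omega$ lies in only finitely many $\{v_k\neq 0\}$, but on a set of positive measure (bounded below) $g$ restricted to $\{v_k\neq 0,\ v_j=0\ \forall j\neq k\}$ equals $\pm v_k$, and integrating $\abs{v_k}$ over the conditional atoms where it equals $\pm(n_k-j)$ or $\pm j$ shows the average value of $\abs{g_k}$ \emph{given} that it is nonzero is of order $n_k$.

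Hence the sharper statement to prove is $\mathbb E\bigl(\abs{g_k}\,;\,\{g_k\neq 0\}\bigr)=1$ while $\mu\{g_k\neq 0\}\asymp 1/n_k$, so the conditional expectation of $\abs{g_k}$ is of order $n_k\to\infty$. I would then argue that the events $\{$only $g_k$ active$\}$ for a suitable sparse subsequence of $k$'s are disjoint and each has measure comparable to $1/n_k$, and on each one $\abs{g}\geqslant \tfrac12 n_k$ on a further subset of measure $\gtrsim 1/n_k^2$ per level; summing the contributions $\sum_k \mathbb E(\abs g;\{\text{only }g_k\})\geqslant \sum_k c\cdot \mathbb E\abs{g_k}=\sum_k c=\infty$ yields $g\notin\el^1$. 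The genuinely delicate point, and where I would spend the most care, is justifying that the supports of the different blocks $v_k$ overlap negligibly so that no cancellation rescues integrability; given the rapid lacunarity \eqref{lac2} ($n_{k+1}\geqslant (k+1)^2 n_k$) the blocks live on widely separated time-scales and this disjointness is essentially automatic, so I expect the argument to go through with the constant-mass computation $\mathbb E\abs{g_k}=1$ as its engine.
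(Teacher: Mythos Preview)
Your overall strategy --- showing that each block $g_k$ contributes a bounded-below amount of $\el^1$ mass on pairwise disjoint events --- is the same as the paper's. However, the execution contains a genuine error: you claim that the supports $\{U^je_k\neq 0\}=T^{-j}A_k$ for different $j$ are \emph{disjoint}, and base the ``exact'' computation $\mathbb E|g_k|=1$ on this. They are not disjoint; the construction makes the family $(e_k\circ T^i)_{i\in\Z}$ \emph{independent}, and independence gives $\mu(T^{-i}A_k\cap T^{-j}A_k)=1/n_k^4>0$ for $i\neq j$. (Disjointness is in fact impossible: a probability space cannot contain infinitely many disjoint translates of a set of positive measure.) Hence the formula $\mathbb E|v_k|=\sum_j |c_j|\cdot \mu(\text{support})$ is unjustified as written.

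The fix is precisely what the paper does, and what you partially sketch later for the cross-$k$ interaction but neglect to apply \emph{within} a single block: restrict to the events $E_j$ on which exactly one $U^{-i}e_k$ (namely $i=j$) is nonzero and all $g_l$ with $l\neq k$ vanish. On $E_j$ one has $|g|=|g_k|=j$ exactly, the $E_j$ are pairwise disjoint, and $\mu(E_j)\geqslant c/n_k^2$ by independence, so $\mathbb E[|g|\chi(F_k)]\geqslant c\,n_k^{-2}\sum_{j=1}^{n_k}j\geqslant c/2$ where $F_k:=\bigcup_j E_j$. Since the $F_k$ are themselves disjoint across $k$, summing gives $\mathbb E|g|=\infty$. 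Your proposal has the right skeleton, but the disjointness error at the first step means the ``engine'' $\mathbb E|g_k|=1$ is not established, and your fallback estimate (``$|g|\geqslant \tfrac12 n_k$ on a set of measure $\gtrsim 1/n_k^2$'') would by itself only yield contributions of order $1/n_k$, which are summable and hence insufficient.
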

\begin{proof}
 Fix an integer $k$, and define for $1\leqslant j\leqslant n_k$:
\begin{equation*}
 E_j:=\ens{\abs{U^{-j}e_k}=1}\cap\bigcap_{i\in\ens{1,\dots,2n_k-1}
 \setminus\ens j}
 \ens{U^{-i}e_k=0}\cap \bigcap_{l\neq k}\ens{g_l=0}.
\end{equation*}
Since these sets are pairwise disjoint, $g=\sum_{k\geqslant 
1}g_k$, with $g_k:=U^{-1}\sum_{i=0}^{n_k-1}U^{-i}
\left[\sum_{h=0}^{n_k-1}U^{-h}e_k\right]$ and $g_l(\omega)=0$ if $l\neq k$ and 
$\omega\in 
\bigcup_{j=1}^{n_k}E_j$, we have the equality of functions 
\begin{align*}
\abs{g_k}\cdot\chi\left(\bigcup_{j=1}^{n_k}E_j\right)
&=\sum_{j=1}^{n_k}\abs{g_k}\cdot\chi(E_j)\\
&=\sum_{j=1}^{n_k}\chi(E_j)\cdot\abs{U^{-1}\sum_{i=0}^{n_k-1}U^{-i}
\left[\sum_{h=0}^{n_k-1}U^{-h}e_k\right]}\\
&=\sum_{j=1}^{n_k}\chi(E_j)\cdot\abs{\sum_{i=0}^{n_k-1}\sum_{h=0}^{n_k-1}
U^{-1-(i+h)}e_k}=\sum_{j=1}^{n_k}\chi(E_j)
\abs{j U^{-j}e_k}\\
&=\sum_{j=1}^{n_k}j\cdot \chi(E_j)
\end{align*}
and hence 
\begin{equation}\label{below_bound_g_k}
 \norm{\abs{g_k}\cdot \chi\left(\bigcup_{j=1}^{n_k}E_j\right)}_1=
 \sum_{j=1}^{n_k}j\cdot \mu(E_j).
\end{equation}

 As $\left(1-1/n_k^2\right)^{-1}\to 1$ for $k\to +\infty$ and 
 $\prod_{j\geqslant 1}\left(1-1/n_j^2\right)^{2n_j}$ is positive, we get 
\begin{equation}\label{below_bound_g_k_2}
 \mu(E_j)\geqslant\frac 1{n_k^2}\left(1-\frac 1{n_k^2}\right)^{2n_k-1}
 \prod_{l\neq k}\left(1-\frac 
1{n_l^2}\right)^{2n_l}\geqslant \frac c{n_k^2}
\end{equation}
for some positive constant $c$ independent of $k$ and $j$. 

Let us define $F_k:=\bigcup_{j=1}^{n_k}E_j$ for $k\geqslant 1$. 
Notice that the event $F_k$, $k\geqslant 1$ are pairwise disjoint because 
$F_k\subset\ens{g_k\neq 0}\cap \bigcap_{l\neq k}\ens{g_l=0}$. 
Therefore, combining \eqref{below_bound_g_k} and 
\eqref{below_bound_g_k_2}, we obtain for each $k\geqslant 1$, 
\[\mathbb E[|g|\chi(F_k)]=\mathbb E[|g_k|\chi(F_k)]\geqslant
c/2.\]

It then follows that \[\mathbb E\abs g\geqslant 
\sum_k\mathbb E[|g_k|\chi(F_k)]=\infty,\] proving 
Proposition~\ref{moment_transfer_function}.

\end{proof}

\textbf{Acknowledgements.} The authors would like to thank an anonymous referee
for a great number of helpful remarks and
corrections. In particular, the referee suggested the present proof of 
Lemma~\ref{mix} which is simpler
and easier to read than the proof in the original version. During main 
part of the research the second author was visiting the Department of 
Statistics of the University of Michigan; he thanks for the 
hospitality of U-M. Both authors thank Professor Magda Peligrad 
for suggesting the topic and for encouragement.

 \end{document}